\def\Aut {\operatorname{Aut}}
\newcommand{\Sym}{\mathfrak{S}}
\newcommand{\Alt}{\mathfrak{A}}
\newcommand{\Cyc}{\mathfrak{C}}
\newcommand{\Dih}{\mathfrak{D}}
\newtheorem{llemma}{Lemma}[section]
\newtheorem{theorem}[llemma]{Theorem}
\newtheorem{pproposition}[llemma]{Proposition}
\newtheorem{ccorollary}[llemma]{Corollary}
\theoremstyle{definition}
\newtheorem{definition}[llemma]{Definition}
\newtheorem{assumption}[llemma]{Assumption}
\newtheorem{remark}[llemma]{Remark}
\begin{document}
\title{Automorphisms of singular three-dimensional cubic hypersurfaces}
\author{Artem Avilov}
\thanks{This work is supported by the Russian Science Foundation under grant ¹ 18-11-00121}
\address{Laboratory of Algebraic Geometry, National Research University "Higher School of Economics",
6 Usacheva str., Moscow, 119048, RUSSIA
}
\email{v07ulias@gmail.com}

\maketitle{}
\begin{abstract} In this paper we classify three-dimensional singular cubic hypersurfaces with an action of a finite group $G$, which are not $G$-rational and have no birational structure of $G$-Mori fiber space with the base of positive dimension. Also we prove the $\Alt_{5}$-birational superrigidity of the Segre cubic.
\end{abstract}
\section{Introduction}

In this paper we work over an angebraically closed field $\operatorname{k}$ of characteristic 0. Recall that a \emph{$G$-variety} is a pair $(X, \rho)$, where $X$ is an algebraic variety and $\rho: G\to \Aut(X)$ is an injective homomorphism of groups. We say that $G$-variety $X$ has \emph{$G\mathbb{Q}$-factorial singularities} if every $G$-invariant Weil divisor of $X$ is $\mathbb{Q}$-Cartier.

Let $X$ be a $G$-variety with at most $G\mathbb{Q}$-factorial terminal singularities and $\pi:X\to Y$ be a $G$-equivariant morphism. We call $\pi$ a \emph{$G$-Mori fibration} if $\pi_{*}\mathcal{O}_{X}=\mathcal{O}_{Y}$, $\dim X>\dim Y$, the relative invariant Picard number $\rho^{G}(X/Y)$ is equal to $1$ (in this case we say that $G$ is \emph{minimal}) and the anticanonical class $-K_{X}$ is $\pi$-ample. If $Y$ is a point then $X$ is a \emph{$G\mathbb{Q}$-Fano variety}. If in addition the anticanonical class is a Cartier divisor then $X$ is a \emph{$G$-Fano variety}.

Let $X$ be arbitrary normal projective $G$-variety of dimension 3. Resolving the singularities of $X$ and applying the $G$-equivariant minimal model program we reduce $X$ either to a $G$-variety with nef anticanonical class, or to a $G$-Mori fibration (see e.g.~\cite[\S 3]{23}). So such fibrations (and $G\mathbb{Q}$-Fano varieties in particular) form a very important class in the birational classification. In this paper we consider a certain class of $G$-Fano threefolds.

\begin{definition} A projective $n$-dimensional variety $X$ is a \emph{del Pezzo variety} if it has at most terminal Gorenstein singularities and the anticanonical class $-K_{X}$ is ample and divisible by $n-1$ in the Picard group $\operatorname{Pic}(X)$. If a $G$-Fano variety $X$ is a del Pezzo variety, then we say that $X$ is a \emph{$G$-del Pezzo} variety.
\end{definition}

Del Pezzo varieties of arbitrary dimension were classified by T. Fujita (\cite{24},~\cite{25},~\cite{26}). $G\mathbb{Q}$-factorial $G$-minimal three-dimensional $G$-del Pezzo varieties were partially classified by Yu. Prokhorov in~\cite{2}. The main invariant of a del Pezzo threefold $X$ is the \emph{degree} $d=(-\frac{1}{2}K_{X})^{3}$, it is an integer in the interval from 1 to 8. In this paper we consider the case $d=3$. In this case $X$ is a cubic hypersurface in $\mathbb{P}^{4}$. If $d=8$ then $X$ is a projective space. In this case equivariant birational geometry were studied by I. Cheltsov and C. Shramov in the paper~\cite{57}. The case $d=4$ was considered in the author's previous work~\cite{6}. If $d>4$ then $X$ is smooth (cf.~\cite{2}) while smooth del Pezzo threefolds and their automorphism groups are known well. For other types of $G$-Fano threefolds there are only some partial results: see for example~\cite{13},~\cite{17}.

Classification of finite subgroups of the Cremona group $\operatorname{Cr}_{3}(\operatorname{k})$ is one of the motivations of this paper. The Cremona group $\operatorname{Cr}_{n}(\operatorname{k})$ is the group of birational automorphisms of the projective space $\mathbb{P}^{n}_{\operatorname{k}}$. Finite subgroups of $\operatorname{Cr}_{2}(\operatorname{k})$ were completely classified by I. Dolgachev and V. Iskovskikh in~\cite{1}. The core of their method is the following. Let $G$ be a finite subgroup of $\operatorname{Cr}_{2}(\operatorname{k})$. The action of $G$ can be regularized in the following sence: there exists a smooth projective $G$-variety $Z$ and an equivariant birational morphism $Z\to \mathbb{P}^{2}$. Then we apply the equivariant minimal model program to $Z$ and obtain a $G$-Mori fibration which is either a $G$-conic bundle over $\mathbb{P}^{1}$ (which is a blowing up of a Hirzebruch surface at some points), or a $G$-minimal del Pezzo surface. Dolgachev and Iskovskikh classified all minimal subgroups in automorphism groups of del Pezzo surfaces and conic bundles and so they obtained the full list of finite subgroups of $\operatorname{Cr}_{2}(\operatorname{k})$. But quite often two subgroups from such list are conjugate in $\operatorname{Cr}_{2}(\operatorname{k})$, so it is natural to identify them. One can see that $G$-varieties $Z_{1}$ and $Z_{2}$ give us conjugate subgroups if and only if there exists a $G$-equivariant birational map $Z_{1}\dasharrow Z_{2}$. So we need to classify all rational $G$-Mori fibrations and birational maps between them as well.

Following this program in the three-dimemsional case one can reduce the question of classification of all finite subgroups in $\operatorname{Cr}_{3}(\operatorname{k})$ to the question of classification of all rational $G\mathbb{Q}$-Mori fibrations and birational equivariant maps between them. Such program was realized in some particular cases: simple non-abelian groups which can be embedded into $\operatorname{Cr}_{3}(\mathbb{C})$ (see~\cite{10}, see also~\cite{4},~\cite{14},~\cite{29},~\cite{30}) and $p$-elementary subgroups of $\operatorname{Cr}_{3}(\mathbb{C})$ (see~\cite{11},~\cite{7}).

For applications to Cremona groups we are mostly interested in classification of \emph{rational} del Pezzo varieties, so we assume that $X$ is singular (every smooth cubic threefold is not rational due to the classical result of Clemens and Griffiths~\cite{8}). Singular cubic threefolds whose singularities are nodes were classified by H. Finkelnberg and J. Werner in~\cite{9}. They have described the mutual arrangement of singular points and planes, divisorial class groups and small resolutions of such varieties. Following their paper~\cite{9}, we will use the notation J1--J15 for nodal cubic threefolds everywhere below.

In this paper we are interested in the following problem: classify rational $G$-del Pezzo threefolds of degree 3 that have no $G$-equivariant birational map to a ``more simple'' $G$-Fano threefold (for example $\mathbb{P}^{3}$ or a quadric in $\mathbb{P}^{4}$) or to a $G$-Mori fibration with the base of positive dimension. In this paper we give a partial answer for this question.

The main result of this paper is the following theorem:

\begin{theorem}\label{th1} Let $X=X_{3}\subset \mathbb{P}^{4}$ be a cubic hypersurface and $G$ be a finite subgroup of $\Aut(X)$ such that $X$ is a rational $G$-Fano variety. Suppose that $G$ is neither linearizable nor of fiber type and $X$ is not $G$-birationally equivalent to a quadric. Then there is only the following possibilities for $X$ and $G$:
\begin{enumerate}
\item[1.]
 $X=\left\{\sum\limits_{i=0}^{5}x_{i}=\sum\limits_{i=0}^{5}x_{i}^{3}=0\right\}\subset\mathbb{P}^{5}$, i.e. $X$ is the Segre cubic, and $G$ is $\Aut(X)=\Sym_{6}$, $\Alt_{6}$, $\Sym_{5}$ or $\Alt_{5}$, two last subgroups are standard;
\item[2.]
 $X=\{x_{0}x_{1}x_{2}-x_{3}x_{4}x_{5}=\sum\limits_{i=0}^{5}x_{i}=0\}\subset\mathbb{P}^{5}$ and $G$ is $\Aut(X)=\Sym_{3}^{2}\rtimes\Cyc_{2}$, $\Sym_{3}^{2}$ (which acts transitively on the set of singularities) or $\Cyc_{3}^{2}\rtimes\Cyc_{4}$.
\end{enumerate}
Here by $\Cyc_{n}$ we denote a cyclic group of order $n$, by $\Sym_{n}$ we denote a symmetric group of degree $n$ and by $\Alt_{n}$ we denote an alternating group of degree $n$. Moreover, all $G$-varieties of the first type are $G$-birationally superrigid.
\end{theorem}

The paper organized as follows. In the second section we reduce the first part of the theorem to five cases: J15 (the Segre cubic), J14, J11, J9 and the cases of five singularities (see~\cite{9} for explanation of J1--J15). In the third section we explain the notion of birational rigidity and remind some well-known theorems which helps us to exclude potential non-canonical centers. Then in the fourth section we study the case of the Segre cubic and check its birational rigidity with respect to a non-standard subgroup $\Alt_{5}\subset\Aut(X)$. In the fifth section we consider the case case J14. In the last section we exclude the last three cases which concludes the proof of the theorem~\ref{th1}.

In this paper we use the following notation:
\begin{itemize}
\item
$\Cyc_{n}$ is a cyclic group of order $n$;
\item
$\Dih_{2n}$ is a dihedral group of order $2n$;
\item
$\Sym_{n}$ is a symmetric group of degree $n$;
\item
$\Alt_{n}$ is an alternating group of degree $n$;
\item
$G^{n}$ is the direct sum of $n$ copies of the group $G$;
\item
$(a_{1}, a_{2}, ..., a_{n_{1}})(a_{n_{1}+1}, ..., a_{n_{2}})...(a_{n_{m-1}+1}, ..., a_{n_{m}})$ is a cyclic decomposition of an element $\sigma\in\Sym_{N}$;
\item
$\Sym_{5}\subset\Sym_{6}$ is a \emph{standard} subgroup iff it is the stabilizer of some element in $\{1, ..., 6\}$ for the natural action of $\Sym_{6}$, and \emph{non-standard} otherwise. We use the same notation for $\Alt_{5}\subset\Sym_{6}$;
\item
$\delta^{i}_{j}$ is the Kronecker symbol: $\delta^{i}_{j}=1$ if $i=j$ and $\delta^{i}_{j}=0$ otherwise.
\end{itemize}

The author is a Young Russian Mathematics award winner and would like to thank its sponsors and jury. The author also would like to thank Ivan Cheltsov, Yuri Prokhorov, Constantin Shramov and Andrey Trepalin for useful discussions and comments.

\section{Singularities of three-dimensional cubic hypersurfaces}

\begin{definition} Let $X$ be a Fano threefold with at worst terminal singularities. Let $G\subset\Aut(X)$ be a finite subgroup. We say that $G$ is \emph{minimal} if $X$ has $G\mathbb{Q}$-factorial singularities and $\rho^{G}(X)=1$. In our case it is equivalent to $\operatorname{rk}\operatorname{Cl}(X)^{G}=1$.
\end{definition}
\begin{definition} We call a group $G\subset\Aut (X)$ \emph{linearizable} (resp., \emph{of fiber type}) if there exists a $G$-equivariant birational map $X\dasharrow \mathbb{P}^{3}$ (resp., $X\dasharrow X'$ where $X'$ has a structure of a $G$-Mori fibration with the base of positive dimension).
\end{definition}
\begin{assumption}\label{as} Throughtout this paper $X$ is a singular cubic hypersurface in $\mathbb{P}^{4}$ with only terminal singularities and $G$ is a finite subgroup of $\Aut(X)$ such that $G$ is minimal and is neither linearizable nor of fiber type (so $\Aut(X)$ also has such properties).
\end{assumption}

The inclusion $X\to\mathbb{P}^{4}$ is given by the linear system $|-\frac{1}{2}K_{X}|$ hence the action of $G$ on $X$ is induced from a linear action of $G$ on $\mathbb{P}^{4}$.

The following lemma is well-known.
\begin{llemma}\label{le88} Let $Y$ be a three-dimensional $G$-variety and let $Z$ be a $G$-variety of dimension $1$ or $2$. Let $Y\dasharrow Z$ be a $G$-equivariant dominant rational map whose general fiber is either a rational curve or a rational surface respectively. Then there exists the following commutative diagram
\begin{equation}
\label{diag}\xymatrix{
Y \ar@{-->}[r]\ar@{-->}[d] & Y' \ar[d]\\
Z & Z' \ar@{-->}[l]
}
\end{equation}
where $Y'\to Z'$ is a $G$-Mori fiber space and both horizontal maps are birational and $G$-equivariant.
\end{llemma}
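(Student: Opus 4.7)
My plan is to combine $G$-equivariant resolution of indeterminacy with the $G$-equivariant relative minimal model program for threefolds.

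First, using $G$-equivariant resolution of singularities, I would eliminate the indeterminacy of the rational map $Y\dashrightarrow Z$: this produces a smooth projective $G$-variety $\tilde Y$ together with a $G$-equivariant birational morphism $\sigma:\tilde Y\to Y$ such that the composition $\pi:\tilde Y\to Y\dashrightarrow Z$ is a genuine $G$-equivariant morphism. If useful, one may simultaneously replace $Z$ by a smooth $G$-equivariant birational model, since the statement is formulated up to birational equivalence of $Z$.

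Next, I would run the $G$-equivariant MMP for $\tilde Y$ over $Z$ (i.e.\ the relative MMP). The hypothesis that the general fiber of $Y\dashrightarrow Z$ is rational---either a rational curve or a rational surface---means in particular that the general fiber is uniruled, so $K_{\tilde Y}$ is not $\pi$-pseudoeffective. Carrying out $G$-equivariant divisorial contractions and flips on $G$-orbits of extremal rays in the invariant part of $\overline{NE}(\tilde Y/Z)$, the MMP terminates in a $G$-birational map $\tilde Y\dashrightarrow Y'$ followed by a $G$-Mori fibration $f:Y'\to Z'$; by construction $Z'$ fits into a $G$-equivariant factorization $Y'\to Z'\to Z$, and $Y'$ inherits at most $G\mathbb{Q}$-factorial terminal singularities.

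Finally, I would argue that the morphism $Z'\to Z$ is birational. The generic fiber of $Y'\to Z$ agrees, up to birational equivalence, with the (irreducible and rational) generic fiber of the original map $Y\dashrightarrow Z$; the generic fiber of the Mori fibration $f$ is likewise irreducible. A fiber-dimension count in the factorization $Y'\to Z'\to Z$, together with the irreducibility of the generic fibers of both $Y'\to Z$ and $f$, forces $Z'\to Z$ to be generically finite of degree one, and hence birational. This yields the required commutative diagram.

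The main obstacle is the careful execution of the $G$-equivariant three-fold MMP over $Z$ while preserving the fibration structure: one has to invoke the existence and termination of $G$-equivariant flips and divisorial contractions on $G$-invariant extremal faces, which is by now standard but non-trivial machinery in the equivariant three-dimensional MMP.
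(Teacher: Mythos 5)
Your proof follows essentially the same route as the paper's: turn the rational map into a morphism of smooth projective $G$-varieties via equivariant resolution, then run the $G$-equivariant relative MMP over $Z$, using the rationality (hence rational connectedness) of the general fiber to conclude that the program terminates in a $G$-Mori fibration $Y'\to Z'$. The only ingredient the paper makes explicit that you leave implicit is the preliminary $G$-equivariant compactification of $Y$ and $Z$ (obtained by compactifying the quotient $S/G$ and normalizing in $\operatorname{k}(S)$), which is needed because the lemma does not assume the varieties are projective.
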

\begin{proof} Note that one can equivariantly compactify a $G$-variety $S$: we can compactify the quotient $S/G$ and take its normalization in the function field $\operatorname{k}(S)$. Therefore  we may assume that $Y$ and $Z$ are projective. Then we can equivariantly resolve the singularities (see~\cite{20}). So we may assume that $Y\dasharrow Z$ is a $G$-equivariant morphism between smooth projective varieties. Then we apply the $G$-equivariant relative minimal model program to $Y$ over $Z$. Since fibers of the map $Y\to~Z$ are rationally connected, at the end we obtain a required $G$-Mori fibration $Y'\to Z'$. This gives us the required commutative diagram.
\end{proof}

\begin{remark} In the case where $Y\dasharrow Z$ is of relative dimension one the commutative diagram~\eqref{diag} can be taken so that $Y'$ and $Z'$ are smooth varieties (see~\cite{12} for details).
\end{remark}

The following easy lemma is very important:

\begin{llemma}\label{le1}~
\begin{enumerate}
\item
The variety $X$ has no $G$-fixed singular points and no $G$-invariant lines;
\item there are no $G$-invariant planes in $\mathbb{P}^{4}$.
\end{enumerate}
\end{llemma}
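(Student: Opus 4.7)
The plan is to derive a contradiction in each case by projecting from the forbidden object. Since the paragraph preceding the lemma records that $G$ acts linearly on $\mathbb{P}^{4}$, every $G$-invariant linear subspace $L\subset\mathbb{P}^{4}$ yields a $G$-equivariant linear projection onto a linear $G$-variety; if that projection restricted to $X$ either is birational to $\mathbb{P}^{3}$ or is dominant onto a lower-dimensional base with rational fibres, the conclusion will violate either non-linearisability or, via Lemma~\ref{le88}, non-fiber-type.

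For a $G$-fixed singular point $p\in X$, the linear projection $\pi_{p}\colon X\dashrightarrow\mathbb{P}^{3}$ from $p$ is $G$-equivariant and birational. Birationality holds because $p$ has multiplicity exactly $2$ on $X$: a point of multiplicity $3$ on a cubic threefold would force $X$ to be a cone with vertex $p$, whose vertex is a non-terminal (at best canonical) singularity, contradicting Assumption~\ref{as}. A general line through $p$ therefore meets $X$ with multiplicity $2$ at $p$ and in exactly one residual point, producing an inverse rational map $\mathbb{P}^{3}\dashrightarrow X$. Thus $G$ is linearisable, a contradiction.

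For a $G$-invariant line $\ell\subset X$, the linear projection of $\mathbb{P}^{4}$ from $\ell$ restricts to a $G$-equivariant dominant rational map $\varphi_{\ell}\colon X\dashrightarrow\mathbb{P}^{2}$, where $\mathbb{P}^{2}$ parametrises the planes of $\mathbb{P}^{4}$ containing $\ell$. For every such plane $\Pi$ one has $\Pi\cap X=\ell\cup C_{\Pi}$ with $C_{\Pi}$ a conic, so the general fibre of $\varphi_{\ell}$ is an open subset of a (possibly degenerate) conic, hence rational. Lemma~\ref{le88} then produces a $G$-Mori fibration with positive-dimensional base, a contradiction. Similarly, for a $G$-invariant plane $\Pi\subset\mathbb{P}^{4}$, the pencil of hyperplanes through $\Pi$ is a $G$-equivariant $\mathbb{P}^{1}$ and induces a $G$-equivariant dominant rational map $\psi_{\Pi}\colon X\dashrightarrow\mathbb{P}^{1}$. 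If $\Pi\subset X$, then $X\cap H=\Pi\cup Q_{H}$ for every $H$ in the pencil, with $Q_{H}$ a quadric surface, so general fibres are rational; if $\Pi\not\subset X$, then $X\cap H$ is a cubic surface which, for very general $H$, is irreducible and not a cone over an elliptic curve, hence rational. Lemma~\ref{le88} again yields a $G$-Mori fibration of fiber type.

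The main obstacle is justifying the rationality statements: the multiplicity computation at a terminal singular point in the first case, and the rationality of a general cubic surface hyperplane section through an invariant plane in the last. The $G$-equivariance throughout is automatic from the linearity of the $G$-action on $\mathbb{P}^{4}$.
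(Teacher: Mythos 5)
Your proposal is correct and follows essentially the same route as the paper: project from the $G$-fixed singular point to get a birational map to $\mathbb{P}^{3}$ (hence linearizability), and project from the invariant line or plane to get a conic fibration over $\mathbb{P}^{2}$ or a fibration by quadric/cubic surfaces over $\mathbb{P}^{1}$, then invoke Lemma~\ref{le88}. The extra justifications you supply (multiplicity $2$ at a terminal hypersurface point, rationality of the general fibres) are correct and simply make explicit what the paper leaves implicit.
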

\begin{proof} If $X$ contains a fixed  singular point then the projection from this point is a $G$-equivariant birational map to $\mathbb{P}^{3}$. Therefore $G$ is linearizable in this case. If $X$ contains a $G$-invariant line then the projection from this line gives us a $G$-equivariant rational curve fibration, and in the case of $G$-invariant plane we get a $G$-fibration by quadric or cubic surfaces in $\mathbb{P}^{3}$ (the first case occurs exactly when the plane lies on~$X$). Then we apply Lemma~\ref{le88}. Therefore $G$ is of fiber type in these cases.
\end{proof}

\begin{llemma}\label{le3} The $G$-orbit of a singular point cannot consist of $4$ elements.
\end{llemma}
\begin{proof}
Assume the opposite: there is a $G$-orbit in $\operatorname{Sing}{X}$ of length 4. If this points are coplanar then $X$ is of fiber type by Lemma~\ref{le1}. So we may assume that they are in general position. Let $S$ be a hyperplane section of $X$ passing through all the singular points from this orbit. It is a singular cubic surface (maybe reducible) with at least four singular points in general position. Due to~\cite{16} such a surface must be either the special cubic surface with four nodes or reducible. In the second case $S$ must be either a union of three planes with unique common point or a union of a quadric cone whose vertice is a singular point of $X$ and a plane, otherwise singularities of $X$ cannot be in general position. In both cases $\Aut(X)$ cannot act transitively on $\operatorname{Sing}(X)$: if $S$ is a union of three planes either the common point of planes is a distinguished singularity of $X$ or there is a distinguished plane that contains exactly two singularities of $X$; if $S$ is a union of a quadric cone and a plane then the vertice of the cone is a distinguished singularity of $X$. One can easily check that the cubic surface with four singularities of type $A_{1}$ in suitable coordinate system has the equation
$$F(x_{0}, x_{1}, x_{2}, x_{3})=x_{0}x_{1}x_{2}+x_{0}x_{1}x_{3}+x_{0}x_{2}x_{3}+x_{1}x_{2}x_{3}=0.$$

Let us consider a canonical map $\pi:\Aut(S)\to \Sym_{4},$ where $\Sym_{4}$ is a group of permutations of singular points. Every element of $\operatorname{PGL}_{4}(\operatorname{k})$ preserving all singular points is a diagonal map. One can easily check that diagonal map preserves $S$ if and only if it is trivial, so the map $\pi$ is injective. On the other hand, the group $\Sym_{4}$ acts on $S$ by permutation of coordinates, so $\Aut(X)\simeq\Sym_{4}$. But the plane $x_{0}+x_{1}+x_{2}+x_{3}=0$ is $\Aut(S)$-invariant and, consequently, $G$-invariant. Thus the group $G$ is of fiber type by Lemma~\ref{le1}.
\end{proof}

\begin{ccorollary}\label{sl1}
The $G$-orbit of a singular point of $X$ has length at least~$5$.
\end{ccorollary}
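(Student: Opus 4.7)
The plan is to rule out $G$-orbits of singular points of length $1$, $2$, and $3$ by directly invoking Lemma~\ref{le1}. All three cases reduce to exhibiting a $G$-invariant linear subspace of $\mathbb{P}^{4}$ (a point, line, or plane), which is forbidden.

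First I would dispose of length $1$ immediately: a one-point orbit is a $G$-fixed singular point of $X$, which is prohibited by part (i) of Lemma~\ref{le1}. Next, if the orbit has length $2$, say $\{p_{1},p_{2}\}$, then since two distinct points determine a unique line, the line $\overline{p_{1}p_{2}}\subset\mathbb{P}^{4}$ is preserved by $G$. This line actually lies on $X$ (the line through two singular points of a cubic meets $X$ with multiplicity at least $4$, forcing it to be contained in $X$), but either way it is a $G$-invariant line in $\mathbb{P}^{4}$ and hence in $X$, contradicting part (i) of Lemma~\ref{le1}.

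Finally, for length $3$ with orbit $\{p_{1},p_{2},p_{3}\}$, I would split into two cases. If the three points are collinear, the line spanned by them is again $G$-invariant, contradicting Lemma~\ref{le1}(i). If they are not collinear, they span a $2$-plane $\Pi\subset\mathbb{P}^{4}$, and since $G$ permutes the spanning set, it preserves $\Pi$; this contradicts part (ii) of Lemma~\ref{le1}. Thus every $G$-orbit of a singular point has cardinality at least $4$.

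There is no real obstacle here: the corollary is an immediate consequence of Lemma~\ref{le1} once one observes that in $\mathbb{P}^{4}$ any set of at most three points is contained in a plane and any set of at most two points in a line. The only mild subtlety is the length $3$ case, where one must treat the collinear and non-collinear subcases separately so as to invoke the correct clause of Lemma~\ref{le1}.
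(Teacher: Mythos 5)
Your proof is correct and follows exactly the argument the paper intends (the corollary is stated without proof as an immediate consequence of Lemma~\ref{le1}): orbits of length $1$, $2$, $3$ produce a $G$-fixed singular point, a $G$-invariant line, or a $G$-invariant line or plane, all excluded. Your observation that a line through two singular points of a cubic must lie on $X$ (intersection multiplicity at least $4>3$) is the right justification for applying Lemma~\ref{le1}(i) in the length-$2$ and collinear length-$3$ cases.
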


\begin{definition} We say that points  $p_{1},..., p_{k}\in \mathbb{P}^{n}$ are \emph{in general position} if no $d$ of them lie in a subspace $\mathbb{P}^{d-2}\subset \mathbb{P}^{n}$ for every $d\leq n+1$.
\end{definition}
The following facts about singular points of cubic threefolds are well-known.

\begin{llemma}\label{le2} Let $Y\subset\mathbb{P}^{4}$ be an arbitrary cubic hypersurface with isolated singularities. Then
\begin{enumerate}
\item no three singular points lie on one line;
\item if four singular points lie on a plane then this plane is contained in $Y$ and there are no other singular points on it;
\item if no four singular points lie on a plane then all singular points of $Y$ are in general position.
\end{enumerate}
\end{llemma}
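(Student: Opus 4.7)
The unifying theme for all three parts is that the vanishing conditions $f(p_i)=\nabla f(p_i)=0$ at several collinear or coplanar singular points produce intersection multiplicities that exceed $\deg Y=3$, eventually contradicting the isolatedness of $\mathrm{Sing}(Y)$. The plan is to prove (i) by examining the partial derivatives of $f$ along a line, deduce (ii) from (i) together with an intersection-multiplicity count on a plane cubic, and finally bootstrap both into (iii) via a direct coordinate calculation.

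For (i), assume three singular points $p_1,p_2,p_3$ lie on a line $L$. Each partial derivative $\partial f/\partial x_j$ is a quadratic form, so its restriction to $L\cong\mathbb{P}^1$ has degree at most $2$ and vanishes at the three distinct points $p_i$; hence it is identically zero on $L$. Consequently $\nabla f\equiv 0$ on $L$, and Euler's identity $3f=\sum_j x_j\,\partial f/\partial x_j$ yields $f\equiv 0$ on $L$, so $L\subset\mathrm{Sing}(Y)$, contradicting isolatedness.

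For (ii), let $p_1,\ldots,p_4$ lie on a plane $\Pi$. By (i) the six lines $L_{ij}=\overline{p_ip_j}\subset\Pi$ are pairwise distinct. If $\Pi\not\subset Y$ then $C:=\Pi\cap Y$ is a plane cubic curve on which each $p_i$ has multiplicity at least $2$, so $L_{ij}\cdot C\ge 2+2>3=\deg C$ forces $L_{ij}\subset C$; but a plane cubic supports at most three distinct lines, contradiction. Hence $\Pi\subset Y$. In coordinates with $\Pi=\{x_0=x_1=0\}$ write $f=x_0g_0+x_1g_1$ with $g_0,g_1$ quadratic; evaluating the partial derivatives of $f$ on $\Pi$ gives $\Pi\cap\mathrm{Sing}(Y)=V(g_0|_\Pi,g_1|_\Pi)$. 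This locus is $0$-dimensional (else $\mathrm{Sing}(Y)$ would contain a curve), hence consists of at most $2\cdot 2=4$ points by Bezout, a bound already attained by $p_1,\ldots,p_4$.

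For (iii), the cases $d=2,3,4$ follow respectively from distinctness, (i), and the hypothesis, so only $d=5$ requires work. Suppose five singular points $p_1,\ldots,p_5$ lie in a hyperplane $H\cong\mathbb{P}^3$; by hypothesis no four of them are coplanar, so they are in general linear position in $H$. Since $H\subset Y$ would force $Y=H+Q$ and give a $2$-dimensional singular locus, instead $S:=Y\cap H$ is a cubic surface, every $p_i$ is a double point, and the degree argument of (ii) places all ten lines $L_{ij}$ inside $S$. It therefore suffices to prove that no non-zero cubic form on $H$ vanishes on these ten lines. Using general linear position, normalise the five points via $\mathrm{PGL}_4$ to the standard frame $e_1,\ldots,e_4,(1{:}1{:}1{:}1)$; the vanishing on the six lines $L_{ij}$ with $i,j\le 4$ kills every monomial involving at most two variables, reducing an admissible cubic to a linear combination of the four triple products $x_ix_jx_k$. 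Substituting the parameterisations of the remaining four lines $L_{i5}$ then produces four further independent linear conditions forcing all coefficients to vanish, the desired contradiction.

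The main obstacle is the final step of (iii). The coordinate normalisation is what makes it tractable: by absorbing four of the five points into the standard vertices, the $20$-dimensional space of cubics collapses to a $4$-dimensional one on which the remaining four conditions are readily seen to be independent. Parts (i) and (ii) are by comparison immediate consequences of elementary intersection theory.
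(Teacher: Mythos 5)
Your argument is correct, and parts (i) and (ii) coincide in substance with the paper's: the paper dispatches (i) by remarking that $\operatorname{Sing}(Y)$ is an intersection of quadrics (your restriction of the partials to the line plus Euler's identity is exactly that remark made explicit), and proves the first half of (ii) by noting that a plane cubic cannot have four singular points with no three collinear, which is what your six-distinct-lines count establishes; the bound of four points on the plane is again attributed to the intersection-of-quadrics fact, which your decomposition $f=x_{0}g_{0}+x_{1}g_{1}$ and Bezout for the two conics $g_{0}|_{\Pi}, g_{1}|_{\Pi}$ makes precise. The genuine divergence is in (iii). The paper considers $Z=Y\cap H$, cites the Bruce--Wall classification of cubic surfaces to conclude that a cubic surface with at least five isolated singular points must be reducible, and then runs a case analysis (quadric plus plane, three distinct planes, a multiple plane) to force either four coplanar or three collinear singular points, contradicting (i) and (ii). You instead argue directly and without external input: $H\not\subset Y$ since otherwise $f$ factors and $\operatorname{Sing}(Y)$ contains a surface; the ten lines joining pairs of the five points lie on $S=Y\cap H$ by the same multiplicity-versus-degree count as in (ii); and after normalising the five points to $e_{1},\dots,e_{4},(1{:}1{:}1{:}1)$ an explicit computation shows that no nonzero cubic form on $\mathbb{P}^{3}$ vanishes on all ten lines (the six coordinate lines kill every monomial in at most two variables, and each line to $(1{:}1{:}1{:}1)$ kills one of the four remaining coefficients). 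Your route is longer on the page but elementary and self-contained, replacing the appeal to the classification of singular cubic surfaces by a four-dimensional linear algebra check; the paper's route is shorter but leans on the cited classification and on reusing (i) and (ii) inside the case analysis. Both are complete proofs.
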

\begin{proof} The first statement can be easily deduced from the fact that the singular set of $X$ is an intersection of quadrics.

Assume that four singular points of $Y$ lie on a plane $P$. Suppose that $P$ is not contained in $Y$. Then $Y\cap P$ is a cubic curve with four singular points such that no three of them lie on one line. It is impossible, so $P$ lies on $Y$. The second part of (ii) also follows from the fact that the singular set of $X$ is an intersection of quadrics.

Let $H$ be a hyperplane such that it contains at least five singular points of $Y$ and no four of them lie on a plane. Consider the intersection $Z=Y\cap H$. It is a cubic surface with at least five singular points. Due to~\cite{16} such a surface must be reducible. If $Z$ is a union of a quadric and a plane, then at least four singular points lie on this plane. If $Z$ is a union of three different planes then all singular points lie on three lines  (each line is the intersection of two planes). Thus in this case we again see, that one of three planes contains four singular points of $Y$. If $Z$ contains a double or triple plane, then such plane is the singular set of $Z$, so it contains five singular points of $Y$. This contradiction proves (iii).
\end{proof}

\begin{pproposition}\label{ut1} Assume that all singularities of $X$ are nodes. Then there is one of the following possibilities:

\begin{center}
{\rm
\begin{tabular}{|p{3,5cm}|p{1.1cm}|p{1.1cm}|p{1.1cm}|p{1.1cm}|p{1.1cm}|}

\hline
type of $X$ in terms of~\cite{9} & J5 & J9 & J11 & J14 &J15\\
\hline
$s(X)$ & 5 & 6 & 6 & 9 & 10\\
\hline
$p(X)$ & 0 & 0 & 3 & 9 & 15\\
\hline
$r(X)$ & 1 & 2 & 3 & 5 & 6\\
\hline
\end{tabular}
}
\end{center}

where $s(X)$ is the number of nodes, $p(X)$ is the number of planes on $X$ and $r(X)$ is the rank of $\operatorname{Cl}(X)$.
\end{pproposition}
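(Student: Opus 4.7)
The plan is to use the Finkelnberg--Werner classification of nodal cubic threefolds (fifteen types J1--J15, recorded in~\cite{9}) as the starting input and then sieve it down to the six types listed in the table by invoking Assumption~\ref{as}. For every type \cite{9} records the triple $(s(X),p(X),r(X))$ together with the combinatorial incidence pattern of the nodes, the lines joining pairs of nodes, and the planes contained in $X$; we only need to rule out the nine types that do not appear in the statement.

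The first cut is immediate: Corollary~\ref{sl1} gives $s(X)\geq 4$, which eliminates J1, J2, J3. For each remaining type not on the target list (namely J6, J7, J8, J10, J12, J13 in \cite{9}'s ordering) I would read off the Finkelnberg--Werner data and exhibit a forbidden $\Aut(X)$-invariant subvariety, thereby contradicting Lemma~\ref{le1}. In practice one of three mechanisms applies in each case: (a) $X$ contains a single distinguished plane, which is then $\Aut(X)$-invariant; (b) one node is combinatorially distinguished (for instance it lies on every plane contained in $X$, or is the unique node incident to a prescribed number of other nodes or planes) and is therefore $\Aut(X)$-fixed; (c) the action of $\Aut(X)$ on the set of nodes must admit an orbit of length at most three, producing an $\Aut(X)$-fixed singular point or, together with Lemma~\ref{le2}(i), an $\Aut(X)$-invariant secant line. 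Each outcome contradicts Assumption~\ref{as} via Lemma~\ref{le1}.

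The geometric glue is Lemma~\ref{le2}: it says that any four coplanar nodes already span a plane contained in $X$ that carries no further nodes, and that otherwise the whole singular set is in general linear position. This rigidity is what lets us translate incidences of nodes into incidences of planes and vice versa, so that the ``distinguished'' subvarieties in mechanisms (a)--(c) above really are $\Aut(X)$-invariant. The main obstacle, and essentially the only substantive work, is this case-by-case bookkeeping for J6, J7, J8, J10, J12, J13: in each case one must unpack the Finkelnberg--Werner description of the node/plane incidence graph just enough to expose a small orbit or a distinguished element. Once these six exclusions are done, the surviving types are exactly J4, J5, J9, J11, J14, J15, with the invariants $(s(X),p(X),r(X))$ read off directly from~\cite{9}.
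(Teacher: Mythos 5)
Your proposal follows essentially the same route as the paper: rule out J1--J3 via Corollary~\ref{sl1}, then eliminate J6, J7, J8, J10, J12 and J13 by exhibiting an $\Aut(X)$-invariant plane or singular point from the Finkelnberg--Werner incidence data and invoking Lemma~\ref{le1}, reading the invariants $(s(X),p(X),r(X))$ off from~\cite{9}. The only content the paper supplies that you deferred as ``bookkeeping'' is the concrete datum for each excluded type: J6--J8 each contain exactly one plane (hence invariant), J10 and J12 each have one distinguished node ($p_{7}$ in the notation of~\cite{9}), and J13 has a distinguished quadruple of nodes spanning an invariant plane.
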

\begin{proof} Cases J1--J4 of~\cite{9} are impossible because $X$ has at least five singular points by Corollary~\ref{sl1}. If $X$ is of type J6, J7 or J8, then $X$ contains exactly one plane, which is $\Aut(X)$-invariant. If $X$ is of type J10 or J12, then there is a distinguished $\Aut(X)$-invariant singular point of $X$ ($p_{7}$ more presicely). If $X$ is of type J13, then there is a distinguished quadruple of singular points $p_{5}, p_{6}, p_{7}, p_{8}$ which lie on one $\Aut(X)$-invariant plane. Hence in all these cases we can apply Lemma~\ref{le1} to obtain a contradiction. Numbers $s(X)$, $p(X)$ and $r(X)$ can be found in~\cite{9}.
\end{proof}

Now we consider the case where not all the singularities of $X$ are nodes.
\begin{pproposition}\label{ut2} Assume that $X$ contains a singularity which is not a node. Then the variety $X$ has exactly five singularities of type $cA_{1}$ or $cA_{2}$ in general position and no other singularities.
\end{pproposition}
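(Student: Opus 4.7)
I would split the proof into two parts: first, showing every singularity of $X$ is a node ($cA_1$) or of type $cA_2$; second, proving there are exactly four or five of them, in general position. At a singular point $p$, choose coordinates on $\mathbb{P}^4$ so $p=[0:0:0:0:1]$, and write $X$ in the affine chart $x_4=1$ as $f_2(x_0,x_1,x_2,x_3)+f_3(x_0,x_1,x_2,x_3)=0$ with $f_i$ homogeneous of degree $i$. Terminality of $X$ at $p$ forces $\operatorname{mult}_p X\le 2$ (the blow-up discrepancy at $p$ equals $3-\operatorname{mult}_p X$), so $f_2\not\equiv 0$; by Reid's classification $p$ is compound Du Val. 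A standard completion-of-squares argument in the nondegenerate directions of $f_2$ reduces $X$ locally to a normal form whose analytic type is determined by $\operatorname{rank}(f_2)$ and the behaviour of $f_3$ on the vertex of $\{f_2=0\}\subset\mathbb{P}^3$. The resulting trichotomy is: $p$ is a node iff $\operatorname{rank}(f_2)=4$; $p$ is $cA_2$ iff $\operatorname{rank}(f_2)=3$ and $f_3$ does not vanish at the vertex $v$; in every remaining case $X$ contains a line $\ell_p\subset X$ through $p$ canonically determined by the singularity (the vertex direction of $\{f_2=0\}$ in the rank-$3$-bad case, or a line coming from a zero of $f_3$ on the vertex line of $\{f_2=0\}$ in the rank-$2$ case), and in fact a plane $\Pi_p\subset X$ through $p$ when $\operatorname{rank}(f_2)=1$.

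To rule out non-$cA_2$, non-node singularities I use the $G$-action. The assignments $p\mapsto\ell_p$ and $p\mapsto\Pi_p$ are intrinsic, hence $\Aut(X)$-equivariant. In the rank-$1$ case, the $G$-orbit of $p$ produces a $G$-invariant collection of planes on $X$; either one such plane is $G$-invariant, directly contradicting Lemma~\ref{le1}(ii), or the $G$-invariant linear span of all of them is a proper subspace (yielding a smaller $G$-invariant plane or line via Lemma~\ref{le1}), or it is all of $\mathbb{P}^4$, in which case the $G$-equivariant pencil/net of planes gives a $G$-equivariant dominant rational map $X\dashrightarrow Z$ with $\dim Z>0$ and rational fibres, and Lemma~\ref{le88} upgrades this to a $G$-Mori fibration, contradicting Assumption~\ref{as}. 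The rank-$3$-bad and rank-$2$ cases are analogous with canonical lines replacing planes: either the $G$-invariant linear span of $\{\ell_q\}_{q\in\mathcal{O}(p)}$ is a proper subspace (contradicting Lemma~\ref{le1}), or it is $\mathbb{P}^4$, and in the latter case I use the conic-bundle structure of projection from a line in the $G$-orbit together with the $G$-equivariant combinatorics of the orbit to produce a $G$-equivariant rational map compatible with Lemma~\ref{le88}, reaching the same contradiction.

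Having established that every singularity of $X$ is $cA_1$ or $cA_2$, I turn to the count and the general-position property. By Lemma~\ref{le2}(i) no three singular points lie on a line; if four were coplanar, Lemma~\ref{le2}(ii) would force the plane into $X$, but such a plane is intrinsic to the configuration and thus $G$-invariant, contradicting Lemma~\ref{le1}(ii). Hence no four singular points are coplanar, so Lemma~\ref{le2}(iii) gives general position. Corollary~\ref{sl1} yields $s(X)\ge 4$; the main obstacle is the upper bound $s(X)\le 5$. I would fix a $cA_2$ singular point $p$ of $X$, note that the projection $\pi_p:X\dashrightarrow\mathbb{P}^3$ from $p$ is birational, and show that the images of the remaining singular points of $X$ are constrained to lie on the sextic curve $\{f_2=f_3=0\}\subset\mathbb{P}^3$ parametrising lines on $X$ through $p$; a detailed enumeration using general position, terminality at the remaining points, and the $G$-minimality constraint then caps $s(X)$ at $5$. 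This last step, an analogue of the Finkelnberg--Werner classification (used in Proposition~\ref{ut1}) adapted to the $cA_2$ setting, is where the bulk of the casework lies.
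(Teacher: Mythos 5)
Your local classification of the singularities is incorrect, and the error lands exactly on the hard case. A corank-one point (i.e.\ $\operatorname{rank}(f_2)=3$) is always of type $cA_1$, whatever $f_3$ does at the vertex: the general hyperplane section still has a nondegenerate rank-$3$ quadratic part, hence is an $A_1$ surface singularity. The standard example is $x^2+y^2+z^2+t^3=0$, which your trichotomy labels $cA_2$ but which is a non-nodal $cA_1$ point --- and it carries no canonical line or plane, so your equivariant exclusion mechanism never touches it. Genuine $cA_2$ points have $\operatorname{rank}(f_2)=2$; these are the points your argument tries to kill via canonical lines, yet the proposition explicitly allows them. Moreover the exclusion mechanism itself is broken: Lemma~\ref{le1} forbids only $G$-invariant lines and planes, so a proper $G$-invariant linear span of dimension $3$ is no contradiction, and projection from a line that is not itself $G$-invariant is not a $G$-equivariant map, so Lemma~\ref{le88} does not apply to it.

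The second, independent gap is that the upper bound $s(X)\le 5$ --- the actual content of the proposition --- is deferred to an ``enumeration where the bulk of the casework lies'' and never carried out; it is not clear the sextic-curve constraint alone can bound the number of points. The paper gets the bound from two numerical inputs absent from your proposal: Teissier's formula $\deg X^{\vee}=24-\sum_{p}m(p)\ge 3$ with $m(p)=\mu(p)+\mu'(p)\ge 4$ for any cDV point that is not $cA_1$ (equality exactly for $cA_2$), which together with the orbit-length bound of Corollary~\ref{sl1} forces an orbit of $4$ or $5$ such points and leaves no room for further singularities; and, for the case where all singularities are $cA_1$ but not all are nodes, Prokhorov's inequality $\sum_{p}\lambda(X,p)\le 7$ for rs-nondegenerate points combined with Reid's result that $x^2+y^2+z^2+t^3=0$ is $\mathbb{Q}$-factorial. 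You would need to either reproduce these inputs or supply a genuinely complete replacement for them. The general-position step via Lemma~\ref{le2} is the one part of your proposal that is essentially correct and matches the paper.
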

\begin{proof} We have the following formula for the degree of the dual variety: $$\operatorname{deg}X^{\vee}=3\cdot 2^{3}-\sum\limits_{p\in\operatorname{Sing}(X)} m(p),$$ where $m(p)=\mu(p)+\mu'(p)$ is the sum of Milnor numbers of a singularity $(X, p)$ and its hyperplane section (see~\cite{15} for details). Obviously, we have $\operatorname{deg}X^{\vee}\geq 3$, hence $$\sum\limits_{p\in\operatorname{Sing}(X)} m(p)\leq 21.$$ Note that three-dimensional terminal hypersurface singularities are exactly cDV points (see~\cite[Corollary 5.38]{28}). If $p$ is a cDV singularity which is not of type $cA_{1}$, then $\mu(p)\geq 2$ and $\mu'(p)\geq 2$ (one can see it easily from the definition of Milnor number), and equalities hold exactly for $cA_{2}$ singularities. Thus singularities are of type $cA_{2}$ and we have five of them. The number of remaining singularities cannot be greater than $\frac{21-5\cdot 4}{2}=\frac{1}{2}$, so there are no other singularities on $X$.

Assume that all singularities of $X$ are $cA_{1}$ points. In some analytic neighbourhood they can be given by an equation $$x^{2}+y^{2}+z^{2}+t^{n}=0,\ n>1.$$ Hence they are \emph{rs-nondegenerate} (see~\cite[Definition 10.1, Proposition 10.3]{2}). On the other hand, $r(X)=\operatorname{rk}\operatorname{Cl}(X)\leq 3$ by~\cite[Theorems 1.7, 7.1 and 8.1]{2}. Thus we can apply the following formula (see~\cite[Proposition 10.6]{2}):
\begin{equation}\label{eq9}
\sum\limits_{p\in\operatorname{Sing}(X)}\lambda (X, p)\leq r(X)-\rho(X)+h^{1, 2}(X')-h^{1, 2}(\widehat{X})\leq 7,
\end{equation}
where $X'$ is a general smooth cubic, $\rho(X)$ is the Picard number of $X$, variety $\widehat{X}$ is a standard resolution of $X$ (see~\cite[Definition 10.1]{2}) and $\lambda(X, p)$ is the number of exceptional divisors of $\widehat{X}\to X$ over $p$. It follows from~\eqref{eq9} and Corollary~\ref{sl1} that all singular points lie in one $\Aut(X)$-orbit and $\lambda(X, p)=1$ for every singular point $p$. By the direct computations one can easily see that $\lambda(X, p)=1$ iff $n\leq3$. Hence $n=3$. Assume that the number of singular points is greater than 5. Then $r(X)\geq 2$ by the formula~\eqref{eq9} and $X$ is not $\mathbb{Q}$-factorial. But the singularity $x^{2}+y^{2}+z^{2}+t^{3}=0$ is $\mathbb{Q}$-factorial (see~\cite[Corollary 1.16]{31}), a contradiction. Thus the number of $cA_{1}$ points which are not nodes cannot be greater than 5.

As an easy consequence from the Lemma~\ref{le2}, all singular points are in general position.
\end{proof}

\begin{remark} Later we will show (see \S6) that two cases from the statement of the Proposition~\ref{ut2} cannot occur.
\end{remark}

\section{Birational rigidity and singularities of linear systems}

Let $X$ be a variety with at most terminal singularities and let $G$ be a finite group acting on $X$ such that $X$ is a $G\mathbb{Q}$-Fano variety.

\begin{definition} $G$-Fano variety $X$ is \emph{$G$-birationally rigid} if for every $G$-Mori fibration $X'\to Y$ such that $X$ and $X'$ are $G$-birationally equivalent $X\simeq X'$. If moreover $\operatorname{Bir}(X)=\Aut(X)$ then $X$ is \emph{$G$-birationally superrigid}.
\end{definition}

For checking $G$-birational rigidity of variety $X$ we need to consider all $G$-invariant linear subsystems $\mathcal{M}\subset |-\mu K_{X}|$ for rational $\mu$ without fixed components. For every $\mathcal{M}$ we need to find all centers of non-canonical singularities and Sarkisov links for them. If there is no non-canonical centers then $X$ is $G$-birationally superrigid. If all Sarkisov links are birational automorphisms of $X$ then $X$ is birationally rigid (see~\cite{56} for details).

For excluding possible non-canonical centers we need the following theorems:

\begin{theorem}\label{th11}\textnormal{(see, e.g.~\cite[Lemma 1.10]{31})} In our notation let a smooth point $p\in X$ be a center of a non-canonical singularity of the pair $(X, \frac{1}{\mu}\mathcal{M}).$ Let cycle $Z=M_{1}\cdot M_{2}$ be an intersection of two general members of the linear system $\mathcal{M}$. Then $\operatorname{mult}_{p}Z>4\mu^{2}.$
\end{theorem}

\begin{theorem}\label{th10}\textnormal{(see, e.g.~\cite[Theorem 1.7.20]{32})} Let $p\in X$ be an ordinary double point. Let $D$ be an effective $\mathbb{Q}$-divisor on $X$ such that the pair $(X, D)$ is not canonical at $p$. Then $\operatorname{mult}_{p}D>1$.
\end{theorem}

\begin{theorem}\label{th12}\textnormal{(see, e.g.~\cite[Exercise 6.18]{33})} In our notation let $C$ be an irreducible curve that is a center of a non-canonical singularity of the pair $(X, \frac{1}{\mu}\mathcal{M}).$ Then the multiplicity of $\mathcal{M}$ along $C$ is greater than $\mu$.
\end{theorem}
\section{Segre cubic}
In this section we consider the case where $X$ is a cubic hypersurface in $\mathbb{P}^{4}$ of type J15 satisfying Assumption~\ref{as} and $G$ is the corresponding minimal subgroup of $\Aut(X)$. Such a variety is unique up to isomorphism. It is called \emph{Segre cubic} and can be explicitly given by the following system of equations: $$\sum\limits_{i=0}^{5} x_{i}=\sum\limits_{i=0}^{5} x_{i}^{3}=0$$ in $\mathbb{P}^{5}$. This variety has many interesting properties (see e.g.~\cite[\S 9.4.4]{21}):
\begin{itemize}\item it is a unique cubic threefold with ten isolated singularities.
\item the automorphism group of $X$ is isomorphic to $\Sym_{6}$ and acts on $X$ via permutation of coordinates.
\item the variety $X$ contains exactly 15 planes forming an $\Aut(X)$-orbit and one of them is given by equations $$x_{1}+x_{2}=x_{3}+x_{4}=x_{5}+x_{6}=0.$$
\item singular points of $X$ form an $\Aut(X)$-orbit, the point $(1:1:1:-1:-1:-1)$ is one of them.
\end{itemize}

\begin{pproposition}The group $G$ coincides with one of the following subgroups of $\Aut(X)\simeq \Sym_{6}$:
$$\Alt_{5},\ \Sym_{5},\ \Alt_{6},\ \Sym_{6},$$ where $\Sym_{5}$ and $\Alt_{5}$ are standard subgroups.
\end{pproposition}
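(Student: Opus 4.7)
The plan is to work inside $\Aut(X)=\Sym_6$ acting by coordinate permutations, exploit the combinatorial description of the Segre cubic (singular points $\leftrightarrow$ partitions of $\{1,\ldots,6\}$ into two triples; planes on $X$ $\leftrightarrow$ partitions into three pairs, or ``synthemes''), and then sweep through the conjugacy classes of subgroups of $\Sym_6$. Assumption~\ref{as} together with Lemma~\ref{le1} impose three constraints on $G$: no $G$-fixed $3{+}3$-partition, no $G$-invariant plane (nor line) in $\mathbb{P}^4$, and $G$-minimality $\operatorname{rk}\operatorname{Cl}(X)^G=1$.

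First I would determine the $\Sym_6$-module structure of $W:=(\operatorname{Cl}(X)/\operatorname{Pic}(X))\otimes\mathbb{Q}$. Since the fifteen planes generate $\operatorname{Cl}(X)$, this reduces to decomposing the permutation representation on synthemes; a direct character computation splits it as $\mathbf{1}\oplus\chi_{(4,2)}\oplus W$ with $W$ a $5$-dimensional irreducible, distinct from both the standard rep $V$ and its sign twist (the $9$-dimensional piece $\chi_{(4,2)}$ accounts for the relations between the classes of planes). Minimality then becomes $W^G=0$, since $\operatorname{rk}\operatorname{Cl}(X)^G=1+\dim W^G$.

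Next I would enumerate subgroups of $\Sym_6$ and eliminate candidates. Any $G$ contained in the stabilizer $(\Sym_3\times\Sym_3)\rtimes\Cyc_2$ of a $3{+}3$-partition fixes a singular point, and any $G$ contained in $\Cyc_2\wr\Sym_3$ fixes a plane on $X$; absence of invariant lines and planes in $\mathbb{P}^4$ is read off from the restriction of $V$. Combined with Corollary~\ref{sl1} this kills all but a short list of candidates: $\Alt_6, \Sym_6$, and the two conjugacy classes each of $\Sym_5$ and $\Alt_5$ (standard and non-standard, related by the outer automorphism of $\Sym_6$). For $\Alt_6$ and $\Sym_6$ everything is immediate. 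For standard $\Sym_5$ and $\Alt_5$, $V$ restricts as a trivial line plus a $4$-dimensional irreducible (forbidding invariant $2$- or $3$-dimensional subspaces), Burnside gives transitivity on the ten singular points, and a character sum yields $W^G=0$.

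The main obstacle is the non-standard copies $\Sym_5^{\mathrm{ns}}$ and $\Alt_5^{\mathrm{ns}}$: they act transitively on $\{1,\ldots,6\}$ (and therefore on the ten singular points and the fifteen synthemes), and $V$ restricts irreducibly, so Lemma~\ref{le1} offers no obstruction whatsoever. The exclusion must come from minimality alone. Using that the outer automorphism of $\Sym_6$ sends transpositions to triple transpositions, $3$-cycles to elements of cycle type $3^2$, and $(2{+}3)$-cycles to $6$-cycles, and identifying $\Sym_5^{\mathrm{ns}}\cong\operatorname{PGL}_2(\mathbb{F}_5)$ acting on $\mathbb{P}^1(\mathbb{F}_5)$, one evaluates the character of $W$ on every cycle type appearing in $\Sym_5^{\mathrm{ns}}$ and finds $\dim W^{\Sym_5^{\mathrm{ns}}}=\dim W^{\Alt_5^{\mathrm{ns}}}=1$, so $\operatorname{rk}\operatorname{Cl}(X)^G=2$ and the minimality assumption fails. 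This leaves exactly the four groups listed in the statement.
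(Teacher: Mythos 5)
Your proposal is correct, and it reaches the same four groups by a genuinely different mechanism than the paper at the one point where the argument is delicate. The paper's proof is a GAP-assisted sweep resting on four facts, two of which are imported from Prokhorov's work: the stabilizer of a node is the $\Sym_3\wr\Cyc_2$ stabilizing a $3{+}3$-partition, and a transitive (non-standard) $\Sym_5$ has an orbit of length $5$ on the fifteen planes, whose sum is then asserted not to be proportional to $K_X$ (this last assertion implicitly uses that $H$ is not divisible by $3$ in $\operatorname{Cl}(X)$). You instead identify $\operatorname{Cl}(X)\otimes\mathbb{Q}$ as the quotient $\mathbf{1}\oplus W$ of the syntheme permutation representation $\mathbf{1}\oplus\chi_{(4,2)}\oplus W$ and test minimality as $W^G=0$ by a character sum; I have checked that $\chi_W=(5,-1,1,3,-1,-1,2,1,-1,0,0)$ on the classes $(1^6,21^4,2^21^2,2^3,31^3,321,3^2,41^2,42,51,6)$ and that your claimed values $\dim W^{\Sym_5^{\mathrm{ns}}}=\dim W^{\Alt_5^{\mathrm{ns}}}=1$ and $\dim W^{\Sym_5^{\mathrm{std}}}=\dim W^{\Alt_5^{\mathrm{std}}}=0$ are right. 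What your route buys is a uniform, self-contained minimality test that replaces both the citation of the $5$-orbit of planes and the unproved non-proportionality claim; what the paper's route buys is brevity and no need to pin down the module structure of $\operatorname{Cl}(X)$.

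One small imprecision: the sentence claiming that the stabilizer/invariant-subspace constraints together with Corollary~\ref{sl1} already reduce to the six candidate classes is not literally true. For instance $\Cyc_5\rtimes\Cyc_4$ fixing one coordinate is contained in none of the excluded maximal subgroups, has no invariant line or plane (since $V$ restricts as $\mathbf{1}$ plus an irreducible $4$-dimensional piece), and acts transitively on the ten nodes; it is killed only by $\dim W^{\Cyc_5\rtimes\Cyc_4}=1$, i.e.\ by the minimality test, and the same applies to $\Dih_{10}$ and $\Cyc_5$. Your framework handles these (minimality is one of your three stated constraints, and the character computation is the same), but the sweep must actually be run on these intermediate subgroups rather than only on the six survivors; the paper's own bullet list has the same blind spot for subgroups of the standard $\Sym_5$ that are not contained in any of its four excluded families.
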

\begin{proof} One can easily prove this using the list of all subgroups of $\Sym_{6}$ (for example, one can use GAP~\cite{27} to construct such a list) and following simple facts.
\begin{itemize}
\item
The group $G$ with the natural action on the set of coordinates cannot have an invariant pair of coordinates: otherwise there is a $G$-invariant plane in $\mathbb{P}^{4}$.
\item
The group $G$ is not a subgroup of $\Sym_{3}^{2}\rtimes \Cyc_{2}\subset\Aut(X)$ since such group is a stabilizer of some singular point of $X$ (see~\cite[p. 252]{3}).
\item
Every subgroup $\Sym_{5}\subset\Aut(X)$ acting transitively on the set of coordinates has an orbit of length 5 in the set of planes in $X$ (see~\cite[Lemma 3.8]{3}). The sum of all planes in such orbit cannot be proportional to the canonical class of $X$. Hence such subgroups $\Sym_{5}$ are not minimal. As a consequence, $G$ is not a subgroup of such group.
\item
Every subgroup $H\simeq \Sym_{4}\times \Cyc_{2}$ acting transitively on the set of coordinates is the stabilizer of some plane on $X$ since all such subgroups are conjugate and the stabilizer of a plane has the same property. As a consequence, $G$ is not a subgroup of such group.
\end{itemize}
Using these facts we exclude all the possibilities for $G$ except four classes from the assertion. On the other hand all these four classes of subgroups act transitively on the set of planes, so they are minimal.
\end{proof}

\begin{remark} I. Cheltsov and C. Shramov~\cite{4} proved that the Segre cubic with the action of $G=\Alt_{6}$ has no equivariant birational transformations to another $G$-Mori fibration (in other words, the Segre cubic is $G$-birationally rigid). So the $G$-birational rigidity of Segre cubic with the action of $\Alt_{5}$ and $\Sym_{5}$ is the only open problem related to the $G$-rigidity property of the Segre cubic. There are some other examples of $\Alt_{5}$-birationally rigid Fano threefolds (see \cite{4}, \cite{29}).
\end{remark}

Now we will prove that $X$ is $G$-birationally superrigid if $G\simeq \Alt_{5}$ is a standard subgroup. Without loss of generality we may assume that $G$ fixes the coordinate $x_{0}$. Let $H_{0}=\{x_{0}=0\}$ be a hyperplane and $S=X\cap H_{0}$. It is a smooth cubic surface named Clebsch cubic.
\begin{llemma}\label{le4} Assume that $O$ is an orbit of some point of $X$. Then one of the following possibilities holds:
\begin{itemize}
\item $O$ consists of 5 elements. In this case points of $O$ are in general position and a line through any two of them does not lie on $X$;
\item $O$ contains three colinear points such that the line passing through them does not lie on $X$;
\item $O$ contains six points such that any three of them are not colinear and any five of them are not coplanar.
\end{itemize}
\end{llemma}
\begin{proof} Assume that the length of $O$ is less than 10. One can easily check that $O$ is the orbit of the point $(-1:1:0:0:0:0)$ and satisfies the first possibility. From now $O$ contains at least 10 elements.

Assume that $O\subset X\setminus S$. Since $O$ generates $\mathbb{P}^{4}$ ($H_{0}$ is the only $G$-invariant subspace), we can choose subset $P=\{p_1, p_2, p_3, p_4, p_5\}\subset O$ of points in general position. Suppose that any other point of $O$ lies on the line $l_{ij}$ passing through $p_{i}$ and $p_{j}$ for some $i, j$. Then one of two possibilities holds: either there are two points $p_{6}$ and $p_{7}$ in $O\setminus P$ on $l_{ij}$ and $l_{ik}$ respectively for some $i, j, k\in\{1,..., 5\}, j\neq k$, or all points of $O\setminus P$ lies on $l_{ij}$ and $l_{kl}$ where $i, j, k, l$ are distinct. In the second case for every point $p\in O$ all points of $O\setminus {p}$ lie on a hyperplane. Obviously, this is impossible. In the first case the set $(P\cup \{p_{6}, p_{7}\})\setminus\{p_{i}\}$ satisfy the third possibility.

Assume that $O\subset S$.

\begin{llemma}\label{lem15} Let $V_4=\{\sum x_{i}=0\}\subset V_{5}$ be the simplicial $4$-dimensional representation of $\Alt_{5}$. Let $v\in V$ be a nonzero vector such that it is not proportional to a vector in the orbit of $(3\xi+2\xi^{2}+\xi^{3}, 3\xi^{4}+2\xi^{3}+\xi^{2}, 1, 1, 1)$, where $\xi$ is a root of unity of fifth degree. Then there exist a subgroup $F\subset\Alt_{5}, F\simeq\Cyc_{5}$ such that $F$-orbit of $v$ generates $V_{4}$.
\end{llemma}
\begin{proof} It can be done by the direct computations in the following way. If for every $\Cyc_{5}\subset \Alt_{5}$ the $\Cyc_{5}$-orbit of $v$ doesn't generate $V_{4}$ then $v$ lies in a $3$-dimensional subrepresentation of $\Cyc_{5}$. For every subgroup $\Cyc_{5}\subset \Alt_{5}$ we have four $3$-dimensional subrepresentations. Using computer one can easily see, that if we take a $3$-dimensional subrepresentations of $\Cyc_{5}$ for every $\Cyc_{5}\subset \Alt_{5}$ then their intersection is either trivial or $1$-dimensional and generated by a vector of required form.
\end{proof}
\begin{ccorollary} For any point $p\in S$ one can find a subgroup $G\subset\Alt_{5}, G\simeq\Cyc_{5}$ such that $p$ is not a fixed point of $G$ and points in $G$-orbit of $p$ are in general position.
\end{ccorollary}
\begin{proof} One can easily check that point $(3\xi+2\xi^{2}+\xi^{3}:3\xi^{4}+2\xi^{3}+\xi^{2}:1:1:1)$, where $\xi$ is a root of unity of fifth degree, is not a point of $S$, so we can apply the previous lemma.
\end{proof}
By the previous corollary, there is a $\Cyc_{5}$-orbit $\{p_1, p_2, p_3, p_{4}, p_5\}=P\subset O$ such that points of $P$ are in general position. If there is a point $p\in O\setminus P$ that does not lie on the line passing through any two points of $P$ then the third possibility holds. In other case we can assume without loss of generality that $p\in l_{12}$, where $l_{ij}$ is the line passing through $p_{i}$ and $p_{j}$. If $l_{12}\not\subset S$ then the second possibility holds. Otherwise $l_{12}, l_{23}, l_{34}, l_{45}, l_{51}$ are lines on $S$ and $O$ coincides with $\Sigma_{10}$ (unique orbit of 10 elements on $S$) or $\Sigma_{15}$ (unique orbit of 15 elements on $S$) by~\cite[Lemma 6.3.12]{29}. But this is impossible by~\cite[Lemma 6.3.13]{29} since $p_{1}$, $p_{2}$ and $p$ are colinear.
\end{proof}

Let $\mathcal{M}\subset|-\mu K_{X}|$ be an $\Alt_{5}$-invariant linear system without fixed components.
\begin{llemma}\label{le5} Point is not a non-canonical center of the pair $\left(X, \frac{1}{\mu}\mathcal{M}\right)$.
\end{llemma}
\begin{proof} Assume that a singular point $p$ is a non-canonical center. Since the group $\Alt_{5}$ transitively acts on the set $\operatorname{Sing}(X)$, all singular points are non-canonical centers. Let $P\subset X$ be a plane and let $Q\subset P$ be a general conic passing through 4 singular points. Conic $Q$ does not lie on $M_{1}$ since $\mathcal{M}$ has no base components. So by Theorem~\ref{th10}
$$4=Q\cdot\left(\frac{1}{\mu}M_{1}\right)\geq 4\cdot\operatorname{mult}_{p}\mathcal{M}>4.$$
Contradiction. Thus singular points are not non-canonical centers.

Assume that smooth point $p$ is a non-canonical center. Let us apply Lemma~\ref{le4}. Let $M_{1}$ and $M_{2}$ be general elements of $\mathcal{M}$ and let $Z=M_{1}\cdot M_{2}$. If the first possibility holds then the linear system $\mathcal{C}$ of cubic hypersurfaces in $\mathbb{P}^{4}$ with singularities at the orbit of $p$ has no base curves on $X$. Let $Y$ be a general element of $\mathcal{C}$. Then by Theorem~\ref{th11}
$$36\mu^{2}=M_{1}\cdot M_{2}\cdot Y=Z\cdot Y>5\cdot 4\mu^{2}\cdot 2=40\mu^{2}.$$
Assume that the second possibility holds. Let $l\not\subset X$ be a line passing through three point in the orbit of $p$ and let $L$ be a general hyperplane passing through $l$. Then by Theorem~\ref{th11}
$$12\mu^{2}=M_{1}\cdot M_{2}\cdot L=Z\cdot L>3\cdot 4\mu^{2}=12\mu^{2}.$$
Finally, assume that the third possibility holds. Then there exists a linear system of quadrics passing through 6 points in the orbit of $p$ without base curves on $X$. Let $Q$ be a general member of such linear system. Then by Theorem~\ref{th11}
$$24\mu^{2}=M_{1}\cdot M_{2}\cdot Q=Z\cdot L>6\cdot 4\mu^{2}=24\mu^{2}.$$
In any case we obtain a contradiction.
\end{proof}
\begin{llemma}\label{le6} Curve is not a non-canonical center of the pair $\left(X, \frac{1}{\mu}\mathcal{M}\right)$.
\end{llemma}
\begin{proof} Let $C_{1}$ be an irreducible curve which is a non-canonical center and let $\{C_{i}\mid 1\leq i\leq r\}$ be its $\Alt_{5}$-orbit. Let us denote $C=\sum\limits_{i=1}^{r}C_{i}$ and $d=\deg C_{1}$. Let $H$ be a general hyperplane section of $X$. Then by Theorem~\ref{th12}
$$12\mu^{2}=H\cdot M_{1}\cdot M_{2}\geq (\operatorname{mult}_{C}\mathcal{M})^{2}C\cdot H>\mu^{2}rd,$$
so $rd<12$. Since $r$ is an index of some subgroup of $\Alt_{5}$, the number $r$ can be equal to $1, 5, 6$ or $10$.

Assume that $C_{1}\subset S$. We know that $S$ is the Clebsch cubic. By~\cite[Theorem 6.3.18]{29} $\deg C=6$. Since $\mathcal{M}$ has no base components, we obtain
$$6\mu=\mathcal{M}\cdot H_{0}\cdot H\geq(\operatorname{mult}_{C}\mathcal{M})C\cdot H>6\mu,$$
where $H$ is a general hyperplane. Contradiction. Thus we may assume that $C\not\subset S$.

On the one hand $C\cdot H_{0}\leq 11$. On the other hand the orbit of the point $(0:0:0:0:1:-1)$ is the unique $\Alt_{5}$-orbit in $S$ of length less than 12 (see~\cite[Lemma 6.3.12]{29}). So $rd=10$ and $r=1, 5$ or 10. Assume that $r=1$. Then $C$ is an irreducible curve with an action of $\Alt_{5}$. Let $\widetilde{C}$ be its normalization. Note that all points in the intersection of $S$ and $C$ are smooth points of $C$, otherwise the intersection product $C\cdot S$ is greater than $10$. Hence the curve $\widetilde{C}$ with non-trivial action of $\Alt_{5}$ has an $\Alt_{5}$-orbit of length $10$, which contradicts~\cite[Lemma 5.14]{29}. So $r$ cannot be equal to $1$.

Assume that $r=5$. In this case $C_{1}$ is a conic, and $\operatorname{Stab}C_{1}\subset\Alt_{5}$ is isomorphic to $\Alt_{4}$. Without loss of generality we may assume that $\operatorname{Stab}C$ fixes the coordinate $x_{1}$. The plane $P$ which contains $C$ gives us 3-dimensional subrepresentation of $\Alt_{4}$ in our 5-dimensional representation. One can easily see that 5-dimensional representation is a direct sum of two trivial representations and one simplicial 3-dimensional representation, so $P$ is a projectivisation of such simplicial representation. It can be given by the following equations: $x_{0}=x_{1}=\sum\limits_{i=0}^{5}x_{i}$, hence $C\subset S$. Contradiction.

Finally, suppose that $r=10$, in this case $C_{1}$ is a line, $G=\operatorname{Stab}{C_{1}}\simeq\Sym_{3}$ and $C_{1}\cap S$ is a $G$-invariant point. Without loss of generality we may assume that $C_{1}\cap S=(0:0:0:0:1:-1)$. The group $\Sym_{3}$ cannot act faithfully on $\mathbb{P}^{1}$ with a fixed point, so $\Cyc_{3}\subset G$ acts trivially on $C_{1}$ and there is another point $p\in C_{1}$ fixed by $G$. There are three such points on $X\setminus S$: $(1:a:a:a:b:b)$, where $1+3a+2b=1+3a^{3}+2b^{3}=0$. But the line through $(0:0:0:0:1:-1)$ and $(1:a:a:a:b:b)$ do not lie on $X$: for example, the point $(1:a:a:a:b+1:b-1)$ do not a point of $X$.

So, we exclude all possible cases and $C$ is not a non-canonical center.
\end{proof}
\begin{theorem} The Segre cubic with an action of standard subgroup $\Alt_{5}\subset\Sym_{6}$ is $G$-birationally superrigid.
\end{theorem}
\begin{proof} It is a direct corollary of Lemma~\ref{le5} and Lemma~\ref{le6}.
\end{proof}
\section{Cubic hypersurfaces with nine singular points}
In this section we consider the case where  $X$ is a cubic hypersurface in $\mathbb{P}^{4}$ of type J14 satisfying Assumption~\ref{as} and $G$ is the corresponding minimal subgroup of $\Aut(X)$. In this case $X$ is a hyperplane section of the cubic fourfold $Z\subset\mathbb{P}^{5}$, which can be explicitly given by the following equation: $x_{1}x_{2}x_{3}=y_{1}y_{2}y_{3}$ (see~\cite[Prop. 2.2]{5}). One can easily see that the group $\Aut(Z)$ is isomorphic to $(\operatorname{k}^{*})^{4}\rtimes (\Sym_{3}^{2}\rtimes \Cyc_{2})$ where the algebraic torus $(\operatorname{k}^{*})^{4}$ acts on the coordinates diagonally and the subgroup $\Sym_{3}^{2}\rtimes \Cyc_{2}$ acts naturally by permutation of the coordinates (see~\cite[Lemma 1.1]{5}). The singular locus $\operatorname{Sing}(Z)$ consists of nine lines $$l_{ij}=\{x_{k}=y_{l}=0,\ k\neq i, l\neq j\}$$ which intersect $X$ in the singular locus of $X$. Also $Z$ contains nine $3$-spaces $$M_{ij}=\{x_{i}=y_{j}=0\}$$ and the intersections of these subspaces with $X$ are planes on $X$. Note that there is a natural $\Aut(X)$-equivariant bijection $l_{ij}\leftrightarrow M_{ij}$ between lines and $3$-spaces on $Z$.

Suppose that $$a_{1}x_{1}+a_{2}x_{2}+a_{3}x_{3}+b_{1}y_{1}+b_{2}y_{2}+b_{3}y_{3}=0$$ is an equation of the hyperplane which cuts out $X$. Notice that $a_{i}\neq 0$ and $b_{i}\neq 0$ for all $i$. Indeed, otherwise some lines from $\operatorname{Sing}(Z)$ meet $X$ at the same point, that is impossible. Using some diagonal coordinate change one can achieve $a_{1}=a_{2}=a_{3}=A,\ b_{1}=b_{2}=b_{3}=1$ for some number $A$. One can easily see that the hyperplane sections corresponding to $A$ and $\xi A, \xi^{3}=1$ are isomorphic. Note that if $A=-1$ then $(1:1:1:1:1:1)$ is a singular point of $X$ and in fact $X$ is the Segre cubic. Moreover, if $A^{3}\neq -1, 0$ then $X$ contains 9 nodes and no other singularities.

\begin{llemma} Let $Y=Z\cap H$ be a hyperplane section of $Z$ such that $Y$ is a cubic hypersurface in $\mathbb{P}^{4}$ with exactly 9 nodes as singularities. Then every automorphism $\sigma\in \Aut(Y)$ is induced by some automorphism $\bar{\sigma}\in \Aut(Z)$.
\end{llemma}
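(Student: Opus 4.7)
The plan is to deduce the lemma from two ingredients: (i) a linear rigidity statement for the configuration of the nine nodes of $Y$, and (ii) the combinatorial structure of the nodes--planes incidence on $Y$. Since $Y\subset\mathbb{P}^4$ is a cubic hypersurface, $\operatorname{Pic}(Y)$ is generated by the hyperplane class, so every $\sigma\in\Aut(Y)$ is induced by a unique $\tilde\sigma\in\operatorname{PGL}(\mathbb{P}^4)$ of the ambient space $H$. I would aim to show that $\tilde\sigma$ is the restriction of an element of the subgroup $\Sigma:=\Sym_3^2\rtimes\Cyc_2\subset\Aut(Z)$; note that this $\Sigma$ stabilises $H$, because the defining equation $A\sum x_i+\sum y_j=0$ of $H$ is symmetric in the $x_i$'s, symmetric in the $y_j$'s, and invariant under the swap $x\leftrightarrow y$.

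For the rigidity step, I would show that any $\sigma\in\Aut(Y)$ fixing each node $p_{ij}$ pointwise is the identity. Writing $p_{ij}$ as the vector $v_{ij}=e_{x_i}-A\,e_{y_j}\in H\subset V$ and the hypothesis as $\tilde\sigma(v_{ij})=\mu_{ij}v_{ij}$ for some scalars $\mu_{ij}$, I would apply $\tilde\sigma$ to the identity $v_{ij}-v_{ik}-v_{i'j}+v_{i'k}=0$ (which holds in $V$ by direct expansion in the standard basis) and compare coefficients of the four basis vectors $e_{x_i},e_{x_{i'}},e_{y_j},e_{y_k}$. This yields $\mu_{ij}=\mu_{ik}=\mu_{i'j}=\mu_{i'k}$; varying the indices, all nine $\mu_{ij}$ coincide, so $\tilde\sigma$ acts as a single scalar on the spanning set $\{v_{ij}\}\subset H$, hence is projectively trivial.

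For the combinatorial step, I would match the permutation of index pairs induced by $\sigma$ on the nine nodes with an element $\bar\tau\in\Sigma$. The incidence relation $p_{ij}\in\Pi_{kl}\iff i\ne k$ and $j\ne l$ is intrinsic to $Y$ (both the nodes and the planes are intrinsic) and must be preserved by $\sigma$. A direct check shows that the full group of permutations of the $3\times 3$ grid of index pairs acting simultaneously on nodes and on planes and preserving this incidence is exactly $\Sigma$. Since every element of $\Sigma$ preserves $H$ and hence lies in $\Aut(Z)\cap\operatorname{Stab}(H)\hookrightarrow\Aut(Y)$, the required $\bar\tau$ exists.

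Combining the two steps, $\sigma\circ(\bar\tau|_Y)^{-1}$ is an element of $\Aut(Y)$ fixing every node pointwise, so it is the identity by the rigidity step; hence $\sigma=\bar\tau|_Y$ is induced by $\bar\tau\in\Aut(Z)$. The main technical point is the combinatorial identification of the incidence-automorphism group with $\Sigma$, in which one must take care to exclude the ``side-swapping'' automorphism (interchanging the roles of points and planes), which preserves the abstract bipartite incidence graph but does not arise from $\sigma\in\Aut(Y)$, as $\sigma$ acts separately on the nodes and on the planes.
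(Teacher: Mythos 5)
Your strategy is genuinely different from the paper's: the paper extends $\sigma$ to an automorphism $\widetilde\sigma$ of $\mathbb{P}^5$, sets $Z'=\widetilde\sigma(Z)$, and explicitly builds an automorphism of $\mathbb{P}^5$ acting trivially on $H$ that carries $Z$ to $Z'$, using the nine $3$-spaces $M_{ij}$. Your rigidity step is correct and rather clean: the relations $v_{ij}-v_{ik}-v_{i'j}+v_{i'k}=0$ do force all the scalars $\mu_{ij}$ to coincide, and since the nine vectors $v_{ij}=e_{x_i}-Ae_{y_j}$ span the $5$-dimensional space underlying $H$, a projective transformation of $H$ fixing all nine nodes is the identity. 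The identification of the incidence-preserving permutations with $\Sym_{3}^{2}\rtimes\Cyc_{2}$ is also fine (two nodes lie on two common planes exactly when they share an index, so the induced permutation is an automorphism of the $3\times 3$ rook's graph, whose automorphism group has order $72$).

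The gap is in the final realization step. It is false that every element of $\Sigma=\Sym_{3}^{2}\rtimes\Cyc_{2}$ stabilises $H$: the swap $x_{i}\leftrightarrow y_{i}$ sends $\{A\sum x_{i}+\sum y_{j}=0\}$ to $\{\sum x_{i}+A\sum y_{j}=0\}$, which is $H$ only when $A^{2}=1$, and even after correcting by the torus $(\operatorname{k}^{*})^{4}\subset\Aut(Z)$ an element of $\Aut(Z)$ preserving $H$ and inducing the transpose $(i,j)\mapsto(j,i)$ on the nodes exists only when $A^{3}=1$. So if the permutation $\phi$ attached to $\sigma$ is of transpose type and $A^{3}\neq 1$, your argument produces no $\bar\tau$, and the lemma in that case amounts to the claim that no such $\sigma$ exists --- which you have not proved. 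This cannot be extracted from the incidence data alone: the linear map $e_{x_{i}}\mapsto -Ae_{y_{i}}$, $e_{y_{j}}\mapsto -A^{-1}e_{x_{j}}$ preserves $H$, sends $p_{ij}$ to $p_{ji}$ and $\Pi_{kl}$ to $\Pi_{lk}$, yet preserves the cubic $Y$ only when $A^{6}=1$; hence your steps (i) and (ii) are consistent with a transpose-type automorphism for every $A$, and one must actually test the candidate transformation against the cubic equation. Since the paper deduces $\Aut(Y)\simeq\Sym_{3}^{2}$ for $A^{3}\neq \pm 1, 0$ from this very lemma, the transpose case for general $A$ is not a degenerate side issue but the essential content, and your proof needs an additional argument (in effect, the computation above, or the paper's comparison of the equations of $Z$ and $Z'$) to close it.
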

\begin{proof} In fact, a proof of this proposition is contained implicitly in the proof of~\cite[Prop. 2.2]{5}. For the convenience of the reader we reproduce a complete proof here.

Let $\sigma$ be an automorphism of $Y$. Then there exist a (non-unique) automorphism $\widetilde{\sigma}\in \operatorname{PGL}_{6}(\operatorname{k})$ such that $\widetilde{\sigma}|_{Y}=\sigma$. Denote $Z'=\widetilde{\sigma}(Z)$. Notice that $Z'\cap H=Z\cap H$ where $H$ is a hyperplane which cuts out $Y$. So it is enough to prove that there exist such an automorphism $\tau \in \operatorname{PGL}_{6}(\operatorname{k})$ that $\tau(Z)=Z'$ and $\tau|_{H}=\operatorname{Id}$. Let $B$ be a subgroup of $\operatorname{PGL}_{6}(\operatorname{k})$ that consists of all elements acting trivially on $H$. Obviously, $G$ acts transitively on $\mathbb{P}^{5}\setminus H$.

The variety $Z$ contains 9 three-dimensional projective subspaces $M_{ij}$. So $Z'$ also contains nine $3$-spaces. Denote them by $M_{ij}'$. We may assume that $M_{ij}\cap H=M_{ij}'\cap H$ (such intersections are exactly the planes of $Y$). Consider the point $$v=(1:0:0:0:0:0),\ v=\bigcap_{i\neq 1}M_{ij}.$$ We may assume that $$\bigcap_{i\neq 1}M_{ij}=\bigcap_{i\neq 1}M'_{ij}$$ (otherwise we can apply an automorphism $\tau'\in B$). From $M_{ij}\cap H=M_{ij}'\cap H$ we deduce $M_{ij}=M'_{ij}$ if $i\neq 1$. Let $F'=0$ be an equation of $Z'$. Then $F'$ is contained in the intersection of ideals $\bigcap_{i\neq 1}(x_{i}, y_{j})$. It is easy to see that such intersection coincides with the ideal $(x_{2}x_{3}, y_{1}y_{2}y_{3})$, so we may assume that $F'=lx_{2}x_{3}-y_{1}y_{2}y_{3}$ for some non-zero linear polynomial $l$. Assume that $H$ has an equation $m=0$. We know that polynomials $F'$ and $x_{1}x_{2}x_{3}-y_{1}y_{2}y_{3}$ coincide on $H$ and $Y$ is irreducible. Thus one can easily see that $m=l-x_{1}$ and $$F'=(x_{1}+m)x_{2}x_{3}-y_{1}y_{2}y_{3}$$ (after normalization). Then the automorphism $\tau$ acting via $$x_{1}\mapsto x_{1}+m,\ x_{2}\mapsto x_{2},\ x_{3}\mapsto x_{3},\ y_{i}\mapsto y_{i}$$ is contained in $B$ and maps $Z$ to $Z'$. Thus the composition $\tau\circ\widetilde{\sigma}$ is a required automorphism of $Z$ inducing $\sigma$.
\end{proof}

Hence  we obtain that the group $\Aut(X)$ is isomorphic to $\Sym_{3}^{2}\rtimes \Cyc_{2}$ if $A^{3}=1$ and $\Sym_{3}^{2}$ if $A^{3}\neq \pm 1, 0$. But in the last case we have an $\Aut(X)$-invariant plane $x_{1}=x_{2}=x_{3}=0$, so the group $\Aut(X)$ is of fiber type by Lemma~\ref{le1}. Hence we may assume that $X$ is given by the equations $$\left\{ x_{0}x_{1}x_{2}-x_{3}x_{4}x_{5}=\sum\limits_{i=0}^{5}x_{i}=0\right\}\subset\mathbb{P}^{5}=\mathbb{P}(V),$$
where $V$ is a 6-dimensional representation of the group $\Sym_{3}^{2}\rtimes \Cyc_{2}$. Let $$V'=\{\sum\limits_{i=0}^{5}x_{i}=0\}\subset V$$ be a 5-dimensional subrepresentation.

Now we are going to describe all the possible minimal subgroups of the group $\Sym_{3}^{2}\rtimes \Cyc_{2}$. We will consider $\Sym_{3}^{2}\rtimes \Cyc_{2}$ as a subgroup of $\Sym_{6}$ with the natural action on the space $V$ with coordinates $x_{i}$.

\begin{pproposition}\label{ut7}The minimal subgroup $G$ satisfying assumption~\ref{as} coincides with one of the following groups:
$$\Sym_{3}^{2},\ \Cyc_{3}^{2}\rtimes \Cyc_{4},\ \Sym_{3}^{2}\rtimes \Cyc_{2},$$ where the first group acts transitively on the set of coordinates $x_{i}$. Conversely, all such subgroups of $\Aut(X)$ are minimal.
\end{pproposition}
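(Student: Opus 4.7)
The plan is to enumerate the subgroups of $\Aut(X)\simeq\Sym_3^2\rtimes\Cyc_2$ (a group of order $72$) up to conjugacy and filter them by the constraints of Assumption~\ref{as}. The enumeration can be carried out by hand using the wreath-product structure $\Sym_3\wr\Cyc_2$, or with the aid of GAP~\cite{27}, as was done for the Segre cubic in the previous section.

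I would translate the hypotheses on $G$ into combinatorial conditions on its action on the coordinate set $\{x_0,\ldots,x_5\}$ and on the index set $\{(i,j):1\le i,j\le 3\}$, the latter parametrising simultaneously the nine nodes and the nine planes of $X$ via the equivariant bijection $l_{ij}\leftrightarrow M_{ij}$. By Lemma~\ref{le1}(ii), $G$ must permute the coordinates without leaving invariant any two-block partition into sets of size~$3$, since the coordinate subspace spanned by one block cuts out a plane in $\mathbb{P}^4$; this immediately rules out the ``block-diagonal'' $\Sym_3\times\Sym_3$ preserving $\{x_0,x_1,x_2\}\sqcup\{x_3,x_4,x_5\}$ while allowing the ``transitive'' $\Sym_3^2\subset\Sym_3\wr\Cyc_2$. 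By Corollary~\ref{sl1}, no singular point is $G$-fixed. Minimality means $\operatorname{rk}\operatorname{Cl}(X)^G=1$; since $r(X)=2$ and $\operatorname{Cl}(X)\otimes\mathbb{Q}$ is generated by the hyperplane class $H_X$ together with the class of any one plane, this amounts to the requirement that the sum of the plane classes in every $G$-orbit on the nine planes is proportional to $H_X$ in $\operatorname{Cl}(X)\otimes\mathbb{Q}$.

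With these conditions in hand, one runs through the conjugacy classes of subgroups of $\Sym_3^2\rtimes\Cyc_2$ and keeps only those satisfying all of them. The invariant-plane condition already excludes a large block of candidates, and the minimality check eliminates further subgroups whose orbits on the nine planes split non-trivially. What should survive are exactly the three groups in the statement: the transitive $\Sym_3^2$, the extension $\Cyc_3^2\rtimes\Cyc_4$, and the full $\Aut(X)$. For the converse, each of these acts transitively on the nine planes, a fact verified directly from its action on the index set $\{(i,j)\}$, so the only $G$-invariant rational combination of plane classes is the total sum; since the latter is invariant under all of $\Aut(X)$, it must be a scalar multiple of $H_X$, and hence $\operatorname{rk}\operatorname{Cl}(X)^G=1$.

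The principal obstacle is the case analysis for minimality, especially for subgroups of order close to those of the three survivors: one must argue carefully that none of them is transitive on the nine planes, which requires tracking how subgroups of $\Sym_3\wr\Cyc_2$ act on $\{(i,j):1\le i,j\le 3\}$ and ruling out accidental transitivities. Computer algebra via GAP is the cleanest way to dispatch the residual cases, exactly as in the treatment of the Segre cubic.
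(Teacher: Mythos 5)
Your overall strategy (filter the subgroups of $\Aut(X)\simeq\Sym_3^2\rtimes\Cyc_2$ by the no-invariant-plane condition and by minimality expressed through orbit sums of plane classes) is close in spirit to the paper's, and your treatment of minimality is essentially the paper's: since $\operatorname{Cl}(X)\otimes\mathbb{Q}$ is spanned by the plane classes and has rank $2$, transitivity on the nine planes forces $\operatorname{rk}\operatorname{Cl}(X)^G=1$. But there is a concrete gap in how you operationalize ``no invariant plane.'' You translate Lemma~\ref{le1}(ii) only into the condition that $G$ must not preserve the two-block partition of the six coordinates. That filter does not exclude the subgroups isomorphic to $\Sym_3\times\Cyc_3$, for instance $\Cyc_3^2$ extended by the involution swapping the two blocks. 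Such a group acts transitively on the six coordinates, contains the unique $\Cyc_3^2$ and hence is transitive on the nine nodes and the nine planes (so it passes your minimality test and has no fixed singular point), and preserves no coordinate block --- yet it must be thrown out. The paper excludes it by a representation-theoretic argument: every irreducible representation of $\Sym_3\times\Cyc_3$ has dimension at most $2$, so its $5$-dimensional representation on the ambient $\mathbb{P}^4$ contains a $3$-dimensional subrepresentation, i.e.\ an invariant plane that is \emph{not} a coordinate plane. Without some such argument your enumeration would wrongly retain these subgroups, so the list you claim ``should survive'' is not what your stated filters actually produce.

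Two further remarks on efficiency and on a loose point. The paper avoids enumerating all conjugacy classes of subgroups of the order-$72$ group: by Corollary~\ref{sl1} every orbit on the nine nodes has length at least $4$, and since $5\nmid 72$ no orbit has length $5$, so $G$ is forced to be transitive on $\operatorname{Sing}(X)$; this is equivalent to containing the unique $\Cyc_3^2\subset\Aut(X)$, which reduces the search to the eight subgroups lying over the quotient $\Dih_8$. You would do well to incorporate this reduction. Finally, in your converse the phrase ``it must be a scalar multiple of $H_X$'' is not quite the right formulation: what one actually needs is that $(\operatorname{Cl}(X)\otimes\mathbb{Q})^G$ is one-dimensional, which follows because the invariants of the permutation module on the nine planes are spanned by the total sum and surject onto the invariants of its quotient $\operatorname{Cl}(X)\otimes\mathbb{Q}$.
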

\begin{proof}  Recall that there is a natural bijection $l_{ij}\leftrightarrow M_{ij}$ between singular lines and $3$-spaces on $Z$. This bijection induces an $\Aut(X)$-equivariant bijection between singular points and planes on $X$. Since the order of the group $G$ is not divisible by 5, the $G$-orbit of a singular point cannot be of length 5. Hence the group $G$ acts transitively on $\operatorname{Sing}(X)$ by Corollary~\ref{sl1} and all planes form a unique $G$-orbit. Since the group $\operatorname{Cl}(X)$ is a free abelian group generated by the classes of planes (see~\cite{2}), all such subgroups are minimal.  A subgroup of $\Aut(X)$ acts transitively on $\operatorname{Sing}(X)$ if and only if it contains the unique subgroup~$\Cyc_{3}^{2}\subset \Aut(X)$.

On the other hand, $G$ cannot act on $x_{i}$'s and $y_{i}$'s separately since there is no $G$-invariant planes. Also, the group $\Sym_{3}\times \Cyc_{3}$ has only 1- and 2-dimensional irreducible representations, so all subgroups of $G$ isomorphic to $\Sym_{3}\times \Cyc_{3}$ have a 3-dimensional subrepresentation in our 5-dimensional representation $V'$. Thus all such groups have an invariant plane, so they are of fiber type. Thus $G$ is one of the groups from the statement.
\end{proof}

\section{Cubic threefolds with 6 or 5 nodes}

In this section in the following propositions we exclude three remaining cases: J11, J9 and the case of five singular points.
\begin{pproposition} A $G$-variety $X$ of type J11 satisfying the assumption~\ref{as} does not exist.
\end{pproposition}
\begin{proof} Assume that such a $G$-variety $X$ exists. Any variety of type J11 has 6 nodes and no other singularities. Also $X$ contains three planes $\Pi_{1},\ \Pi_{2}$ and $\Pi_{3}$, each plane contains exactly 4 singularities and every singularity is contained in two planes. Let us denote the singularities of $X$ by $p_{1}, ..., p_{6}$. We may assume that the planes $\Pi_{1},\ \Pi_{2}$ and $\Pi_{3}$ contain $(p_{1}, p_{2}, p_{3}, p_{4})$, $(p_{1}, p_{2}, p_{5}, p_{6})$ and $(p_{5}, p_{6}, p_{3}, p_{4})$, respectively. These three planes form a hyperplane section of $X$ and have one common non-singular point. We denote this point by $p_{0}$. Let $l_{12}$ be a line passing through $p_{1}$ and $p_{2}$, let $l_{34}$ be a line passing through $p_{3}$ and $p_{4}$ and let $l_{56}$ be a line passing through $p_{5}$ and $p_{6}$. Let us consider the subgroup $\operatorname{Stab}_{l_{12}}\subset \Aut(X)$. Since $p_{0}$ is an $\Aut(X)$-invariant point and $\{p_{1}, p_{2}\}$ is a $\operatorname{Stab}_{l_{12}}$-invariant set, there is a $\operatorname{Stab}_{l_{12}}$-invariant point $p_{12}$. In the same way we define points $p_{34}$ and $p_{56}$. Obviously, $\{p_{12}, p_{34}, p_{56}\}$ form an $\Aut(X)$-orbit and they are not colinear. So the plane passing through them is $\Aut(X)$-invariant plane that contradicts the Lemma~\ref{le2}.
\end{proof}

\begin{pproposition}\label{pr1} A $G$-variety $X$ of type J9 satisfying the assumption~\ref{as} does not exist.
\end{pproposition}
\begin{proof} Note that for any $7$ points of $\mathbb{P}^{4}$ in general position there exist unique twisted quartic curve passing through them. Let us consider a twisted quartic $C$ passing through singular points of $X$ and one more general point of $X$. If $C$ is not contained in $X$ then $12=C\cdot X\geq 2\cdot6+1=13$. This contradiction shows that the pencil of twisted quartics passing through singularities of $X$ gives us a fibration by rational curves.
\end{proof}

Finally, let $X$ be a cubic hypersurface in $\mathbb{P}^{4}$ with exactly five singular points in general position and $G$ is a subgroup of~$\Aut(X)$ which acts transitively on the singular points of $X$.
\begin{pproposition} In our notation the variety $X$ is $G$-birationally equivalent to a quadric.
\end{pproposition}
\begin{proof} We assume that singular points are $p_{i}=\{x_{j}=\delta_{i}^{j}\}$. The equation of $X$ in such a coordinate system is
$$\sum\limits_{0\leq i<j<k\leq 4} a_{ijk}x_{i}x_{j}x_{k}.$$ The action of $\Aut(X)$ on the set of singularities of $X$ induces a homomorphism $\Aut(X)\to~\Sym_{5}$ with transitive image, so the image contains an element of order 5. Without loss of generality we may assume that the image contains the element $(1, 2, 3, 4, 5)$. Obviously, every element of $\operatorname{PGL}_{5}(\operatorname{k})$ which permutes cyclically singularities of $X$ is the composition of some a diagonal map and the cyclic permutation of coordinates. All such elements are conjugate to each other by diagonal map. This can be seen from the explicit equations for elements of corresponding diagonal matrix. Thus we may assume that $\Aut(X)$ contains a cyclic permutation of the coordinates and $$a_{012}=\xi a_{123}=\xi^{2}a_{234}=\xi^{3}a_{034}=\xi^{4}a_{014},\ a_{013}=\xi a_{124}=\xi^{2}a_{023}=\xi^{3}a_{134}=\xi^{4}a_{024},$$ where $\xi$ is a root of unity of degree 5. After the diagonal change of coordinates $$x_{0}\mapsto x_{0}, x_{1}\mapsto \xi^{2}x_{1}, x_{2}\mapsto \xi^{4}x_{2}, x_{3}\mapsto \xi x_{3}, x_{4}\mapsto \xi^{3}x_{4}$$ we obtain $$a_{012}=a_{123}=a_{234}=a_{034}=a_{014},\ a_{013}=a_{124}=a_{023}=a_{134}=a_{024}.$$Note that all coefficients are nonzero, otherwise the singularities of $X$ are not isolated. Thus we may assume that $a_{012}=1$ and $a_{013}=a$ for some nonzero $a$. Obviously, the homomorphism $\Aut(X)\to\Sym^{5}$ is an injection and every element of $\Aut(X)$ is a composition of a permutation of the coordinates and a diagonal map.

Let us consider the birational automorphism of $\mathbb{P}^{4}$ given by the formula $$\sigma:(x_{0}:x_{1}:x_{2}:x_{3}:x_{4})\mapsto \left(\frac{1}{x_{0}}:\frac{1}{x_{1}}:\frac{1}{x_{2}}:\frac{1}{x_{3}}:\frac{1}{x_{4}}\right).$$ The restriction of $\sigma$ to $X$ gives us a birational $G$-equivariant map from $X$ to a quadric hypersurface $Q\subset\mathbb{P}^{4}$ given by the equation $$x_{1}x_{2}+x_{2}x_{3}+x_{3}x_{4}+x_{4}x_{5}+x_{5}x_{1}+a(x_{1}x_{3}+x_{2}x_{4}+x_{3}x_{5}+x_{4}x_{1}+x_{5}x_{2})=0.$$
One can easily see that either such a quadric is smooth, either it is a cone over smooth quadric surface. In both cases $Q$ is a $G\mathbb{Q}$-Fano threefold and $X$ is not $G$-birationally rigid.
\end{proof}

\end{document}